\title{Uniform approximation of continuous couplings}
\author{Ugo Bindini \and Tapio Rajala}
\address{Department of Mathematics and Statistics, University of Jyväskylä}
\date{\today}
\begin{document}
	
\maketitle

\begin{abstract}
	We study the approximation of non-negative multi-variate couplings in the uniform norm while matching given single-variable marginal constraints.
\end{abstract}

\section{Introduction}

For some integer $N \geq 2$, let $(X_1, d_1, \mu_1), \dotsc, (X_N, d_N, \mu_N)$ be probabilistic metric spaces and $(X, d, \mu)$ the product space $X = X_1 \times \dotsb \times X_N$ equipped with the product measure $\mu = \mu_1 \otimes \dotsb \otimes \mu_N$ and the distance
\[ d(x,x') = \sum_{j=1}^N d_j(x_j,x_j'). \]

We will call a function $\rho_j \in C(X_j)$ a \emph{marginal} if it is non-negative and $\int_{X_j} \rho_j d\mu_j = 1$. Given  $\rho_1, \dotsc, \rho_N$ marginals, we aim to study the space of continuous \emph{couplings} $\Pi(\rho_1, \dotsc, \rho_N)$ defined as
\[ \Pi(\rho_1, \dotsc, \rho_N) \eqdef \gra{P \in C(X) \st P \geq 0 \st \pi_jP= \rho_j \forall j=1, \dotsc, N}, \]
where $\pi_jP$ denotes the $j$-th marginal of $P$, obtained by integrating $P$ with respect to all the reference measures $\mu_1, \dotsc, \mu_N$ except $\mu_j$:
\[ \pi_jP(x_j) = \int_{X_1 \times \dotsb \times \widehat{X_j} \dotsb \times X_N} P(x_1, \dotsc, x_N) d\mu_1(x_1) \dotsm \widehat{d\mu_j(x_j)} \dotsm d\mu_N(x_N). \]
Notice that $\Pi(\rho_1, \dotsc, \rho_N)$ is not empty, because the product $\rho_1 \otimes \dotsb \otimes \rho_N$ defined by
\[ (\rho_1 \otimes \dotsb \otimes \rho_N)(x) \eqdef \rho_1(x_1) \dotsm \rho_N(x_N) \]
is a continuous coupling.

Continuous couplings arise as solutions of the regularized Kantorovich multi-marginal optimal transport problem
\[ \mathcal{K}_h(\rho_1, \dotsc, \rho_N) = \inf_{P \in \Pi(\rho_1, \dotsc, \rho_N)} \gra{\int_X c(x) P(x) d\mu(x) + h F(P)}, \]
where $c \colon X \to \overline{\R}$, $h > 0$ and $F$ is a regularizing functional, typically strictly convex. Examples include the entropic regularization (\cf \cite{carlier2017convergence, clason2021entropic, gerolin2020multi}), the quadratic regularization (\cf \cite{essid2018quadratically, lorenz2021quadratically}) and others (\cf \cite{flamary2014optimal, korman2015optimal}). Here, the continuous marginals $\rho_j$ play the role of a Radon-Nikodym densities with respect to the reference measures $\mu_j$.

This class of problems was introduced for computational reasons as perturbations of the Kantorovich multi-marginal optimal transport problem
\[ \mathcal{K}(\rho_1, \dotsc, \rho_N) = \inf_{P \in \Pi(\rho_1, \dotsc, \rho_N)} \int_X c(x) P(x) d\mu(x), \]
as strict convexity of $\mathcal{K}_h$ provides good convergence properties for the sequence of minimizers (\cf \cite{cuturi2013sinkhorn, santambrogio2015optimal, villani2008optimal}). Minimization problems of the form $\mathcal{K}_h$ emerges also from the setting of Density Functional Theory (\cf \cite{bindini2017optimal, buttazzo2012optimal, cotar2013density, cotar2018smoothing, lewin2018semi, lewin2019optimal}), where the kinetic energy plays the role of the regularizing functional. 

It is important to analyze the continuity properties of $\mathcal{K}_h$ in order to apply variational tools (\eg, $\Gamma$-convergence) to show that in the limit $h \to 0$ we get convergence to $\mathcal{K}$ in a suitable topology. This raises the following question: given a coupling $P$ and marginals $\rho_1, \dotsc, \rho_N$ such that $\norm{\pi_j P - \rho_j}$ is small, does there exist $P' \in \Pi(\rho_1, \dotsc, \rho_N)$ such that $\norm{P - P'}$ is small? In other words, can we approximate a coupling $P$ while matching a given marginal constraint?

This problem has been solved recently for the $L^2$ norm and the $W^{1,2}$ norm on the Euclidean space (see \cite{bindini2019from}), with a construction which can be generalized to every exponent $1 < p < \infty$ in view of \cite{bindini2020smoothing}. The point of this short note is to extend this result to the uniform norm $L^\infty$ for continuous couplings on generic metric measure spaces.

The difficulty in the approximation lies in the non-negativity assumptions on the couplings: if $P'$ were allowed to be any continuous function, we could set
\[ P' = P + \rho_1 \otimes \dotsb \otimes \rho_N - \pi_1P \otimes \dotsb \otimes \pi_N P, \]
which, except for being non-negative, has the required properties (continuity and closeness to $P$).

Our main result is the following.

\begin{thm} \label{thm:main}
	Suppose that $\spt \mu$ is compact. Let $P$ be a coupling, and $\rho_j \in C(X_j)$ be marginals for $j = 1, \dotsc, N$. Then, for every $\eps > 0$ there exist $\sigma(\eps) > 0$ such that, if $\lVert \pi_j P - \rho_j \rVert_{L^\infty(\mu_j)} < \sigma(\eps)$ for every $j = 1, \dotsc, N$ then there exists $P' \in \Pi(\rho_1, \dotsc, \rho_N)$ with $\norm{P - P'}_{L^\infty(\mu)} < \eps$.
\end{thm}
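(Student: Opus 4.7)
Write $\alpha_j \eqdef \pi_j P$, $\widetilde\alpha \eqdef \alpha_1\otimes\dotsb\otimes\alpha_N$, and $\widetilde\rho \eqdef \rho_1\otimes\dotsb\otimes\rho_N$. Compactness of $\spt\mu$ together with continuity of the data furnishes a constant $M$ with $\|P\|_\infty,\|\alpha_j\|_\infty,\|\rho_j\|_\infty \leq M$. My starting point is the natural candidate
\[
    P^* \eqdef P + \widetilde\rho - \widetilde\alpha,
\]
which satisfies $\pi_j P^* = \rho_j$ because $\int \rho_k\,d\mu_k = \int \alpha_k\,d\mu_k = 1$, and obeys $\|P^* - P\|_\infty = \|\widetilde\rho - \widetilde\alpha\|_\infty \leq N M^{N-1}\sigma$ via a telescoping expansion. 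Thus $P^*$ matches the marginal constraint exactly and is $L^\infty$-close to $P$; the only obstruction to setting $P' = P^*$ is possible negativity. A crucial observation for the next step is that wherever $P^*(x) < 0$, by construction $0 \leq P(x) \leq \widetilde\alpha(x) - \widetilde\rho(x) \leq N M^{N-1}\sigma$: the defect of non-negativity is both small in magnitude and confined to where $P$ is already tiny.

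To restore non-negativity while preserving the marginals, the plan is to form the convex combination
\[
    P' \eqdef (1-\lambda) P^* + \lambda R, \qquad R \in \Pi(\rho_1,\dotsc,\rho_N),\ \lambda \in (0,1),
\]
which automatically lies in $\Pi(\rho_1,\dotsc,\rho_N)$. If some reference $R$ admits a uniform lower bound $R \geq c_R > 0$, then $P' \geq 0$ is achieved by choosing $\lambda \sim \sigma/c_R$, and $\|P'-P\|_\infty \lesssim \sigma(1 + M^N/c_R)$ can be made smaller than $\varepsilon$ by taking $\sigma$ small in terms of $\varepsilon,c_R,M,N$. In the favourable case that $\rho_j \geq c > 0$ for every $j$, one simply takes $R \eqdef \widetilde\rho \geq c^N$ and the argument is complete.

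The main obstacle is the general situation: when $\rho_j(x_j^0) = 0$ for some $j$ and some $x_j^0$, every coupling in $\Pi(\rho_1,\dotsc,\rho_N)$ must vanish on the slice $\{x_j^0\}\times\spt(\mu_1\otimes\dotsb\otimes\widehat{\mu_j}\otimes\dotsb\otimes\mu_N)$ by non-negativity, so no uniformly positive reference $R$ exists and the cushion argument breaks down near the zero set of $\widetilde\rho$. I would address this by combining two ingredients. First, a quantitative vanishing estimate: compactness of $\spt\mu$ produces, via a finite cover, a uniform positive lower bound $m_0$ on the $\bigotimes_{k\neq j}\mu_k$-mass of small balls centred at points of the corresponding support, and coupled with the uniform continuity of $P$ this converts any pointwise value $P(x_j^0, y_0) > \tau$ into a bound $\pi_j P(x_j^0) \gtrsim m_0\tau$; via the marginal hypothesis this forces $P$ itself to be uniformly small on neighbourhoods of $\{\widetilde\rho = 0\}$ once $\sigma$ is small. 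Second, a regularise-and-correct step: apply the favourable case to the thickened marginals $\rho_j^\eta \eqdef (1-\eta)\rho_j + \eta$, which are bounded below by $\eta$ and within $O(\eta)$ of $\rho_j$ in $L^\infty$, to obtain $P'_\eta \in \Pi(\rho_1^\eta,\dotsc,\rho_N^\eta)$ with $\|P'_\eta - P\|_\infty < \varepsilon/2$; then remove the regularisation by an explicit correction supported in a neighbourhood of $\{\widetilde\rho = 0\}$ where $P$ is small, using the quantitative vanishing to keep the correction non-negative and of size $o(1)$. Balancing the two parameters $\eta$ and $\sigma$ so that each step contributes at most $\varepsilon/2$ to $\|P' - P\|_\infty$ will be the delicate technical point of the argument.
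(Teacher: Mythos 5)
Your ``favourable case'' (all $\rho_j\ge c>0$) is correct: the convex combination $P'=(1-\lambda)P^*+\lambda\widetilde\rho$ with $\lambda\sim\sigma/c^N$ restores non-negativity while keeping the marginals and the $L^\infty$ distance small. The quantitative vanishing estimate you sketch (small $\pi_jP(x_j)$ forces $P$ small on the whole slice through $x_j$, with a constant built from the radial maximal functions of the $\mu_k$) is exactly the key Lemma the paper proves, so you have identified the right structural fact. But the regularise-and-correct step that is supposed to handle the general case has a genuine gap, and the whole argument hinges on it.

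Concretely: to apply the favourable case to the thickened marginals $\rho_j^\eta=(1-\eta)\rho_j+\eta$ you need $R=\widetilde{\rho^\eta}\ge\eta^N$, so the convex-combination parameter must satisfy $\lambda\gtrsim\bigl(\sigma+\eta(1+M)\bigr)M^{N-1}/\eta^N$. Since the data perturbation created by the thickening is itself $O(\eta)$, this gives $\lambda\gtrsim 1/\eta^{N-1}$, which is $\ge 1$ for $N\ge 2$ once $\eta$ is small --- the combination is no longer convex and the error $\lambda\|R-P^*\|_\infty$ is not controlled. Taking $\sigma$ smaller does not help, because the dominant contribution to $\lambda$ comes from $\eta$, not $\sigma$. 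On the other hand, keeping $\eta$ bounded away from $0$ means the subsequent ``de-regularisation'' must move each marginal by $\eta\|1-\rho_j\|_\infty=O(1)$, and this correction cannot be supported near $\{\widetilde\rho=0\}$ as you propose: $\rho_j^\eta-\rho_j=\eta(1-\rho_j)$ is nonzero (and changes sign) on the whole space, not just near the zero set. So neither regime of the parameter balance closes the argument. The paper avoids this entirely by processing one marginal at a time and, rather than adding a global cushion, \emph{trimming} $P$ multiplicatively where $\pi_1P$ is below the threshold $\sigma(\eps)$: $P_\eps(x)=P(x)\max(0,\pi_1P(x_1)-\sigma(\eps))/\pi_1P(x_1)$. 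The trimmed mass is then redistributed with density $\rho_1-\pi_1P_\eps$, which is automatically non-negative because $\pi_1P_\eps=\max(0,\pi_1P-\sigma(\eps))$ and $\|\rho_1-\pi_1P\|<\sigma(\eps)$. This uses the quantitative vanishing locally (to bound $\|P-P_\eps\|_\infty\le\eps$) in a way that never needs a uniformly positive reference coupling.

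Your observation that the negativity of $P^*$ occurs only where $P$ is already $O(\sigma)$ small is the right intuition, but in your write-up it is noted and then never exploited: the convex combination treats the defect as a global phenomenon. A correct proof along roughly your lines would have to use a \emph{local} correction on the small set $\{P^*<0\}$; making that precise is essentially what the paper's trimming does.
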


We will prove \autoref{thm:main} in Section \ref{section:main}, where we also provide a precise expression for the threshold $\sigma(\eps)$. Then, in Section \ref{section:remarks} we will show that \autoref{thm:main} is sharp when the base metric spaces are Euclidean with the Lebesgue measure, by proving the following remarks.
\begin{enumerate}[A.]
	\item The hypothesis of compactness for the support of $\mu$ cannot be removed.
	\item The continuity hypothesis on $P$ cannot be removed, even assuming all the marginals to be continuous.
	\item If $P$ is Lipschitz, the expression of $\sigma(\eps)$ which emerges from Section \ref{section:main} is sharp.
\end{enumerate}

Unfortunately, when $P$ is not Lipschitz, the threshold $\sigma(\eps)$ cannot be computed exactly as a workable expression. Due to this, the sharpness of \autoref{thm:main} is provided only in the Lipschitz case.
%

\subsection*{Acknowledgements} We are grateful to prof. L. De Pascale (Univ. Firenze) for the fruitful discussions and to the financial support of the Academy of Finland (grant no. 314789).

\section{Proof of \autoref{thm:main}} \label{section:main}

In order to lighten the notation, for $j = 1, \dotsc, N$ we introduce the spaces $\hat{X}_j \eqdef X_1 \times \dotsb \times X_{j-1} \times X_{j+1} \times \dotsb \times X_N$ endowed with the probability reference measure $\hat{\mu}_j \eqdef \mu_1 \otimes \dotsb \otimes \mu_{j-1} \otimes \mu_{j+1} \otimes \dotsb \otimes \mu_N$.

If $P$ is any continuous function on $X$, a modulus of continuity for $P$ is given by the expression
\[ \omega(r) \eqdef \sup_{x,y \in X} \gra{\abs{P(x) - P(y)} \st d(x,y) \leq r}. \]
Observe that $\omega \colon [0,+\infty) \to [0,+\infty)$ is a non-decreasing function with $\omega(0) = 0$. We will need the inverse of the modulus of continuity given by 
\[ \omega^{-1}(t) = \sup_{x,y \in X} \gra{d(x,y) \st \abs{P(x) - P(y)} \leq t}. \]

For $j = 1, \dotsc, N$ we also introduce the radial maximal function $f_j \colon [0, +\infty) \to [0, 1]$
\begin{equation} \label{eq:radial-function}
f_j(r) \eqdef \inf_{x_j \in \spt \mu_j} \mu_j(B(x_j, r)).
\end{equation}
For every $j$, $f_j$ is a non-decreasing function with $f(0) = 0$, $f_j(r) > 0$ for $r > 0$ if $\spt \mu$ is compact.

The following result allows to control in a quantitative fashion the value of $P(x_1, \dotsc, x_N)$ depending on the values $\gra{\pi_jP(x_j)}_{j = 1, \dotsc, N}$.

\begin{lemma} \label{lemma:eps-sigma}
	Let $P$ be a coupling, $\omega$, $f_j$ as above. Then for every $\eps > 0$, for every $j = 1, \dotsc, N$, for $\mu_j$-a.e. $x_j \in X_j$,
	\[ \pi_jP(x_j) \leq \sigma_j(\eps) \implies P(y_1, \dotsc, x_j, \dotsc, y_N) \leq \eps \quad \text{for $\hat \mu_j$-a.e. $y \in \hat{X}_j$,} \]
	where
	\begin{equation} \label{eq:sigma}
		\sigma_j(\eps) = \eps \sup_{\theta \in (0,1)} (1-\theta) \prod_{k \neq j} f_k\pa{\frac{\omega^{-1}(\theta\eps)}{N-1}}.
	\end{equation}
\end{lemma}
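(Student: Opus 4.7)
The plan is to argue by contraposition: fix $x_j \in X_j$ at which the set of $y \in \hat X_j$ with $P(y_1, \dotsc, x_j, \dotsc, y_N) > \eps$ has positive $\hat\mu_j$-measure, and show that necessarily $\pi_j P(x_j) > \sigma_j(\eps)$. The almost-everywhere qualifiers in the statement are absorbed through the continuity of $P$ and $\pi_j P$, which let me work at every relevant point rather than up to negligible sets.

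First I would reduce to a pointwise bound on the support. Since $P$ is continuous, the superlevel set $U = \{y \in \hat X_j : P(y_1, \dotsc, x_j, \dotsc, y_N) > \eps\}$ is open; having positive $\hat\mu_j$-measure, it meets $\spt \hat\mu_j = \prod_{k \neq j} \spt \mu_k$, so I can pick $y^0 = (y^0_k)_{k \neq j}$ in the intersection with each $y^0_k \in \spt \mu_k$ and $P(y^0, x_j) \geq \eps + \delta$ for some $\delta > 0$.

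Next, for each $\theta \in (0, 1)$, I would propagate this pointwise bound to the product ball $B \eqdef \prod_{k \neq j} B(y^0_k, r)$, where $r = \omega^{-1}(\theta\eps)/(N-1)$. The triangle inequality gives $d((y, x_j), (y^0, x_j)) = \sum_{k \neq j} d_k(y_k, y^0_k) \leq (N-1) r = \omega^{-1}(\theta \eps)$ for every $y \in B$, so the modulus of continuity yields $P(y, x_j) \geq P(y^0, x_j) - \theta \eps \geq (1-\theta) \eps + \delta$. Integrating over $B$ and using the product structure of $\hat\mu_j$ together with the lower bound $\mu_k(B(y^0_k, r)) \geq f_k(r)$ (available precisely because $y^0_k \in \spt \mu_k$), one obtains
\[ \pi_j P(x_j) \geq \bigl( (1-\theta) \eps + \delta \bigr) \prod_{k \neq j} f_k\!\left( \frac{\omega^{-1}(\theta \eps)}{N-1} \right). \]
Taking the supremum over $\theta \in (0, 1)$ then gives $\pi_j P(x_j) > \sigma_j(\eps)$, the strict inequality coming from the extra $\delta$-contribution, which is positive because compactness of $\spt \mu$ forces $f_k(r) > 0$ for every $r > 0$.

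The step requiring the most care is the use of the ``inverse'' $\omega^{-1}$: the propagation argument above implicitly invokes an inequality of the form $\omega(\omega^{-1}(\theta \eps)) \leq \theta \eps$, which is immediate when $\omega$ is continuous and strictly increasing, and in general must be arranged either by reinterpreting $\omega^{-1}$ as the generalized inverse $\sup\{r : \omega(r) \leq \theta \eps\}$ (which agrees with the stated definition in regular cases), or by replacing $\omega^{-1}(\theta \eps)$ with slightly smaller radii and passing to a limit, exploiting monotonicity of $f_k$ to preserve the supremum defining $\sigma_j(\eps)$.
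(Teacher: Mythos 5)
Your proof is correct and follows essentially the same route as the paper: propagate the pointwise bound $P>\eps$ to a product of balls of radius $\omega^{-1}(\theta\eps)/(N-1)$ via the modulus of continuity, lower-bound $\pi_j P(x_j)$ by integrating over that product ball using the radial maximal functions $f_k$, and optimize over $\theta$. The differences are cosmetic — you phrase it as an explicit contraposition starting from a positive-measure superlevel set and are somewhat more careful about the a.e.\ quantifiers and the precise meaning of $\omega^{-1}$ (a legitimate concern, since the paper's stated formula for $\omega^{-1}$ does not directly yield the implication $d(x,y)\le\omega^{-1}(t)\Rightarrow|P(x)-P(y)|\le t$ that the argument uses; the generalized inverse $\sup\{r:\omega(r)\le t\}$ that you suggest is what is actually needed), while the paper simply picks a point $x\in\spt\mu$ with $P(x)>\eps$ and proceeds with the same estimate.
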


\begin{proof}
	Let $x = (x_1, \dotsc, x_N) \in \spt \mu$ such that $P(x) > \eps$ (if there is no such point, the thesis is trivially true) and $\theta \in (0,1)$. If $r(\eps) \eqdef \omega^{-1}(\theta\eps)$, we have
	\[ P(x_1', \dotsc, x_j, \dotsc, x_N') > (1 - \theta) \eps \quad \text{if $x_k' \in B\pa{x_k, \frac{r(\eps)}{N-1}}$ for $k \neq j$}, \]
	since for such points we have
	\[ d((x_1', \dotsc, x_j, \dotsc, x_N'),x) = \sum_{k \neq j} d_k(x_k', x_k) < r(\eps). \]
	Now, using the definition \eqref{eq:radial-function},
	\begin{ieee*}{rCl}
		\pi_j P(x_j) &=& \int_{\hat{X}_j} P(x_1', \dotsc, x_j, \dotsc, x_N') d\hat{\mu}_j(x') > \int_{\prod\limits_{k \neq j} B\pa{x_k, \frac{r(\eps)}{N-1}}} (1 - \theta)\eps d\hat\mu_j(x') \\
		&\geq& \eps (1 - \theta) \prod_{k \neq j} f_k\pa{\frac{\omega^{-1}(\theta\eps)}{N-1}}.
	\end{ieee*}
	
	By passing to the supremum in $\theta$ on the right hand side we get the thesis.
\end{proof}

\begin{remark} \label{remark:sigma}
	Observe that $\sigma_j(\eps) = \eps \phi_j(\eps)$, where $\phi_j$ is a non-decreasing function (supremum of non-decreasing functions). If $\spt \mu$ is compact, $\phi_j(\eps) > 0$ for $\eps > 0$, which implies that $\sigma$ is a positive strictly increasing function on $(0,+\infty)$. In particular, the following property holds:
	\begin{equation} \label{eq:sigma-convex}
		\frac{s}{\sigma_j(s)} \leq \frac{\eps}{\sigma_j(\eps)} \quad \forall 0 < \eps \leq s.
	\end{equation}
\end{remark}

\begin{remark}
	If $\spt \mu$ is compact, due to the invertibility of $\sigma_j$, an equivalent way to write the thesis of \autoref{lemma:eps-sigma} is the following: for every $s \geq 0$,
	\begin{equation} \label{eq:inv-sigma}
		\pi_jP(x_j) \leq s \implies P(y_1, \dotsc, x_j, \dotsc, y_N) \leq \sigma_j^{-1}(s) \quad \forall y \in \hat X_j.
	\end{equation}
\end{remark}

A preliminary step towards the proof of \autoref{thm:main} is to modify only one marginal to a given target.

\begin{thm} \label{thm:one-marginal}
	Suppose that $\spt \mu$ is compact. Let $P$ be a coupling and $\rho_j \in C(X_j)$ a marginal for some $j \in \gra{1, \dotsc, N}$. Then there exist constants $\kappa_j, K_j > 0$ depending on $\pi_j P$ such that, if $0 < \eps < \kappa_j$ and $\norm{\rho_j - \pi_j P}_{L^\infty(\mu_j)} < \sigma_j(\eps)$, then there exists a coupling $P'$ such that
	\begin{itemize}
		\item $\pi_jP' = \rho_j$, $\pi_k P' = \pi_kP$ for every $k \neq j$;
		\item $\norm{P - P'}_{L^\infty(\mu)} < K_j \eps$.
	\end{itemize}
\end{thm}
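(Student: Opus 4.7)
My plan is to look for $P'$ of the simple additive form
\[
P'(x) = P(x) + (\rho_j(x_j) - \pi_j P(x_j))\, g(\hat x_j),
\]
for a continuous non-negative function $g \colon \hat X_j \to \R$ with $\int_{\hat X_j} g\, d\hat\mu_j = 1$. This ansatz automatically preserves the marginal structure: $\pi_j P' = \rho_j$ since $\int g\, d\hat\mu_j = 1$, while for $k \neq j$ the perturbation contributes $g_k(x_k) \int_{X_j} (\rho_j - \pi_j P)\, d\mu_j = 0$, because both $\rho_j$ and $\pi_j P$ are marginals. The sup-norm estimate reads $\norm{P - P'}_{L^\infty(\mu)} \leq \sigma_j(\eps)\norm{g}_\infty \leq \norm{g}_\infty \eps$, so $K_j$ can be taken equal to $\norm{g}_\infty$, and the whole task reduces to finding a suitable $g$.

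The non-negativity requirement $P' \geq 0$ becomes the pointwise inequality $g(\hat x_j)(\pi_j P(x_j) - \rho_j(x_j))_+ \leq P(x_j, \hat x_j)$ on $\spt \mu$. Since $(\pi_j P - \rho_j)_+ < \sigma_j(\eps)$, it suffices to enforce $\sigma_j(\eps)\, g(\hat x_j) \leq P(x_j, \hat x_j)$ for every $\hat x_j$ and every $x_j \in \spt \mu_j$ with $\pi_j P(x_j) > \rho_j(x_j)$. A natural first candidate is
\[
m(\hat x_j) \eqdef \inf_{x_j \in \spt \mu_j} P(x_j, \hat x_j), \qquad g(\hat x_j) \eqdef \frac{m(\hat x_j)}{\int_{\hat X_j} m\, d\hat\mu_j},
\]
which is continuous by compactness of $\spt \mu$ and uniform continuity of $P$. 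Whenever $\int m\, d\hat\mu_j > 0$, this $g$ meets the non-negativity bound as long as $\sigma_j(\eps) \leq \int m\, d\hat\mu_j$, which defines $\kappa_j$ through $\sigma_j(\kappa_j) = \int m\, d\hat\mu_j$ (well posed by \autoref{remark:sigma}) and gives $K_j = \norm{m}_\infty / \int m\, d\hat\mu_j$.

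The main obstacle I anticipate is that the infimum $m$ may vanish identically, which happens precisely when the zero set of $P$ projects onto all of $\spt \hat\mu_j$ (for instance $P(x_1, x_2) = 2 x_1 x_2$ on $[0,1]^2$). To handle this I would refine the construction via a partition of unity on $\spt \hat\mu_j$: around each $\hat x_j$ in the support one isolates a compact subset $A \subset \spt \mu_j \cap \{\pi_j P > 0\}$ on which $P(\cdot, \hat x_j)$ is bounded below, extends this lower bound to a neighborhood $U$ of $\hat x_j$ by uniform continuity of $P$ on $\spt \mu$, builds a local density $g_U$ on $U$ satisfying the pointwise inequality, and patches these densities together continuously. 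Ensuring that the patched $g$ still integrates to $1$ with $\norm{g}_\infty$ bounded by a constant depending only on $\pi_j P$ (independent of $\eps$) is the delicate technical step that I expect to carry the bulk of the proof.
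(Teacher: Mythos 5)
Your additive ansatz
\[
P'(x) = P(x) + \bigl(\rho_j(x_j) - \pi_j P(x_j)\bigr)\, g(\hat x_j),
\qquad g \geq 0,\ \textstyle\int g\, d\hat\mu_j = 1,
\]
cannot work in general, and the problem is more structural than a possibly vanishing $m$. Non-negativity of $P'$ forces $g(\hat x_j) = 0$ whenever there exists $x_j \in \spt\mu_j$ with $\rho_j(x_j) < \pi_j P(x_j)$ but $P(x_j, \hat x_j) = 0$, since then the perturbation at $(x_j,\hat x_j)$ is strictly negative while $P(x_j,\hat x_j) = 0$. Consider, on $[0,1]^2$ with Lebesgue measure, a Lipschitz coupling concentrated on a few bumps along the diagonal,
\[
P(x_1,x_2) = c \sum_{k=1}^K \Delta\bigl(\delta,1;x_1-a_k\bigr)\,\Delta\bigl(\delta,1;x_2-a_k\bigr),
\]
with $a_1,\dotsc,a_K$ spaced more than $2\delta$ apart. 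Choosing $\rho_1$ slightly below $\pi_1 P$ on small neighbourhoods of $a_1$ and $a_2$ (and slightly above near $a_3$ to restore $\int\rho_1 = 1$), with $\norm{\rho_1 - \pi_1 P}_\infty$ as small as you like: for any $x_2$ not close to $a_1$, picking $x_1$ near $a_1$ shows $g(x_2) = 0$; for $x_2$ close to $a_1$, picking $x_1$ near $a_2$ shows $g(x_2) = 0$. So $g \equiv 0$, contradicting $\int g = 1$. The partition-of-unity refinement you sketch does not escape this: the pointwise constraint on $g(\hat x_j)$ ranges over \emph{all} $x_j$ where $\pi_j P > \rho_j$, not over a compact subset $A$ that you are free to choose, and restricting the construction to such an $A$ would break the marginal identities. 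The difficulty is that $\rho_j - \pi_j P$ changes sign, so your $P'$ must \emph{subtract} mass somewhere, but a one-signed $g$ cannot track where $P$ is large enough to absorb the subtraction.

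The paper resolves exactly this by first trimming $P$ multiplicatively rather than perturbing it additively: setting $P_\eps(x) = P(x)\,\max(0,\pi_j P(x_j) - \sigma_j(\eps))/\pi_j P(x_j)$, one gets $\pi_j P_\eps = \max(0,\pi_j P - \sigma_j(\eps))$, and under the hypothesis $\norm{\rho_j - \pi_j P}_\infty < \sigma_j(\eps)$ this guarantees $\rho_j - \pi_j P_\eps \geq 0$ \emph{everywhere}. Lemma \ref{lemma:eps-sigma} gives $\norm{P - P_\eps}_\infty \leq \eps$. Only then does one add a non-negative correction, namely $(\rho_j - \pi_j P_\eps)(x_j)$ times the normalised removed mass $\int_{X_j}(P - P_\eps)\,d\mu_j / m(\eps)$, which plays the role of your $g(\hat x_j)$ but is automatically supported where $P$ lives; both factors being non-negative, $P' \geq 0$ is free. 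So the key missing idea is to pre-process $P$ so that the residual marginal deficit has a single sign; after that, an additive correction in the spirit of your ansatz does work.
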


\begin{proof}
	For simplicity of notation we will take $j = 1$ and denote $\sigma(\eps) = \sigma_1(\eps)$, but the argument works in the same way for every $j$. Fix $\eps > 0$ and consider the coupling
	\[
	P_\eps(x) = P(x) \frac{\max(0, \pi_1P(x_1)-\sigma(\eps))}{\pi_1P(x_1)}
	\]
	where $0/0 = 0$. Then $\pi_1P_\eps(x_1) = \max(0, \pi_1P(x_1) - \sigma(\eps))$ and $\pi_jP_\eps(x_j) \leq \pi_jP$.
	
	We have
	\[
	P(x) - P_\eps(x) = P(x) \frac{\min(\pi_1P(x_1), \sigma(\eps))}{\pi_1P(x_1)}.
	\]
	By \autoref{lemma:eps-sigma}, if $\pi_1P(x_1) < \sigma(\eps)$ and $x_1 \in \spt \mu_1$,
	\[ \abs{P(x_1, y) - P_\eps(x_1, y)} < \eps \quad \text{for $\hat{\mu}_2$-a.e. $y \in \hat{X}_2$;} \]
	on the other hand, if $\pi_1P(x_1) \geq \sigma(\eps)$ and $x_1 \in \spt \mu_1$, letting $s = \sigma^{-1}(\pi_1P(x_1))$ in \eqref{eq:sigma-convex} and using \eqref{eq:inv-sigma},
	\[ \abs{P(x_1, y) - P_\eps(x_1, y)} \leq \sigma_1^{-1}(\pi_1P(x_1)) \frac{\sigma(\eps)}{\pi_1P(x_1)} \leq \eps \quad \forall y \in \hat X_2. \]
	
	All in all, 
	\[ \norm{P - P_\eps}_{L^\infty(\mu)} \leq \eps. \]
	
	Let $2c_1 = \mu_1(\gra{\pi_1P > 0}) > 0$, and let $\kappa_1$ such that $\mu_1(\gra{\pi_1P > \sigma(\eps)}) \geq c_1$ whenever $\eps < \kappa_1$. For such $\eps$ we have
	\begin{ieee*}{rCl}
		m(\eps) &\eqdef& \int_X P(x) - P_\eps(x) d\mu(x) = \int_{X_1} \min(\pi_1P(x_1), \sigma(\eps)) d\mu_1(x_1) \\
		&\geq& \int_{\gra{\pi_1P > \sigma(\eps)}} \sigma(\eps) d\mu_1(x_1) = \sigma(\eps) \mu_1(\gra{\pi_1P > \sigma(\eps)}) \geq c_1\sigma(\eps).
	\end{ieee*}
	
	Now we define the coupling
	\begin{ieee*}{rCl}
	P'(x_1, \dotsc, x_N) &=& P_\eps(x_1, \dotsc, x_N) + \frac{\rho_1(x_1) -  \pi_1P_\eps(x_1)}{m(\eps)} \times \\
	&& \int_{X_1} \pa{P(x_1, \dotsc, x_N) - P_\eps(x_1, \dotsc, x_N)} d\mu_1(x_1).
	\end{ieee*}
	
	Since for every $j = 2, \dotsc, N$
	\begin{ieee*}{rCl}
		m(\eps) &=& \int_{X_j} \pi_jP - \pi_jP_\eps d\mu_j = \int_X P - P_\eps d\mu = \int_{X_1} \rho_1 - \pi_1P_\eps d\mu_1,
	\end{ieee*}
	we have $P' \in \Pi(\rho_1, \pi_2P, \dotsc, \pi_NP)$. Moreover, if $\norm{\rho_1 - \pi_1P}_{L^\infty(\mu_1)} < \sigma(\eps)$, we have
	\[ \norm{\rho_1 - \pi_1P_\eps}_{L^\infty(\mu_1)} \leq \norm{\rho_1 - \pi_1P}_{L^\infty(\mu_1)} + \norm{\pi_1P - \pi_1P_\eps}_{L^\infty(\mu_1)} < 2\sigma(\eps), \]
	while clearly
	\[ \norm{\int_{X_1} P(x_1, \dotsc, x_N) - P_\eps(x_1, \dotsc, x_N) d\mu_1(x_1)}_{L^\infty(\hat{\mu}_1)} \leq \norm{P - P_\eps}_{L^\infty(\mu)} \leq \eps. \]
	
	Finally, $P'$ is a non-negative function since, if $\pi_1P(x_1) < \sigma(\eps)$,
	\[ \rho_1(x_1) - \pi_1P_\eps(x_1) = \rho_1(x_1) \geq 0, \]
	while, if $\pi_1P(x_1) \geq \sigma(\eps)$,
	\[ \rho_1(x_1) - \pi_1P_\eps(x_1) = \rho_1 - \pi_1P(x_1) + \sigma(\eps) > 0. \]
	
	Therefore, $\norm{P' - P}_{L^\infty(\mu)} < \norm{P - P_\eps}_{L^\infty(\mu)} + \frac{2}{c_1}\eps \leq \pa{1 + \frac{2}{c_1}}\eps$.
\end{proof}

As a consequence we get the following equivalent formulation of \autoref{thm:main}.

\begin{thm} \label{thm:multi-marginal}
	Suppose that $\spt \mu$ is compact. Let $P$ be a coupling and $\rho_1, \dotsc, \rho_N$ marginals. Then there exist constants $\kappa,K > 0$ depending on $P$ such that, for every $0 < \eps < \kappa$, if $\norm{\rho_j - \pi_jP}_{L^\infty(\mu_j)} < \sigma_j(\eps)$ for all $j$, then there exists $P' \in C(X)$ such that
	\begin{itemize}
		\item $\pi_jP' = \rho_j$ for every $j = 1, \dotsc, N$;
		\item $\norm{P - P'}_{L^\infty(\mu)} < K\eps$.
	\end{itemize}
\end{thm}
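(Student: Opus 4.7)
My plan is to reduce \autoref{thm:multi-marginal} to $N$ successive applications of \autoref{thm:one-marginal}, correcting the marginals one at a time. Setting $P^{(0)} := P$, I would inductively define $P^{(j)}$ as the coupling produced by \autoref{thm:one-marginal} applied to $P^{(j-1)}$ with index $j$ and a parameter $\eps_j > 0$ to be chosen later. Since each application modifies only one marginal and preserves the others, induction yields $\pi_k P^{(j)} = \rho_k$ for $k \leq j$ and $\pi_k P^{(j)} = \pi_k P$ for $k > j$; in particular the quantity $c_j = \mu_j(\gra{\pi_j P > 0})/2$, and therefore the error constant $K_j = 1 + 2/c_j$ coming from \autoref{thm:one-marginal}, is unchanged throughout the iteration. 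Taking $P' := P^{(N)}$ gives a coupling in $\Pi(\rho_1, \dotsc, \rho_N)$, and the telescoping triangle inequality produces $\norm{P - P'}_{L^\infty(\mu)} < \sum_j K_j \eps_j$.

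The delicate point is matching hypotheses at step $j$. Applied to $P^{(j-1)}$, \autoref{thm:one-marginal} requires $\norm{\rho_j - \pi_j P^{(j-1)}}_{L^\infty(\mu_j)} = \norm{\rho_j - \pi_j P}_{L^\infty(\mu_j)}$ to be smaller than $\sigma_j^{P^{(j-1)}}(\eps_j)$, a threshold defined via \eqref{eq:sigma} using the modulus of continuity of $P^{(j-1)}$; however, the hypothesis we are given bounds the same quantity by $\sigma_j(\eps) = \sigma_j^P(\eps)$, computed from the modulus of $P$. To bridge the gap I would analyze how the modulus of continuity evolves through the construction: each application of \autoref{thm:one-marginal} multiplies by the truncation $t \mapsto \max(0, t - \sigma_k(\eps_k))/t$, which is $1/\sigma_k(\eps_k)$-Lipschitz on $[0,\infty)$, and adds a bounded correction of size $O(\eps_k)$, allowing an estimate of the form $\omega_{P^{(j-1)}}(r) \leq C_{j-1} \omega_P(r) + D_{j-1}$ with constants depending on $P$ and on $\eps_1, \dotsc, \eps_{j-1}$.

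The hard part will be precisely this control of intermediate moduli: the Lipschitz constant $1/\sigma_k(\eps_k)$ diverges as $\eps_k \to 0$, so naive iteration degrades the comparison. To compensate, I would choose $\eps_j = c'_j \eps$ with constants $c'_j \geq 1$ selected inductively so that $\sigma_j^{P^{(j-1)}}(c'_j \eps) \geq \sigma_j^P(\eps)$; the key observation is that all data entering the estimate --- compactness of $\spt \mu$, positivity of $\mu_k(\gra{\pi_k P > 0})$, the continuous modulus of $P$ on $\spt \mu$, and the radial functions $f_k$ --- are intrinsic to $P$, so the $c'_j$ can be chosen depending only on $P$. Setting $K := \sum_j K_j c'_j$ and $\kappa := \min_j \kappa_j / c'_j$, both depending only on $P$, then completes the proof; non-negativity and continuity of $P'$ are inherited automatically from each $P^{(j)}$.
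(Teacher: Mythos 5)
Your proposal follows the same overall strategy as the paper: iterate \autoref{thm:one-marginal} over $j = 1, \dotsc, N$, correcting one marginal at a time and using that the untouched marginals $\pi_k P^{(j)} = \pi_k P$ for $k > j$ keep the constants $c_k$, $K_k$, $\kappa_k$ unchanged. The paper's proof stops there; it does not discuss the point you flag, namely that the threshold $\sigma_j$ in \eqref{eq:sigma} depends on the modulus of continuity $\omega$ of the coupling being corrected, and $\omega_{P^{(j-1)}}$ need not coincide with $\omega_P$. You are right that this needs an argument if one wants to invoke \autoref{thm:one-marginal} with the same $\eps$ at every stage, and your instinct to replace $\eps$ by $c'_j\eps$ for constants $c'_j$ depending only on $P$ is the correct way to close it.

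Where your sketch takes a wrong turn is in the quantitative control of the intermediate moduli. You propose an estimate of the form $\omega_{P^{(j-1)}}(r) \leq C_{j-1}\,\omega_P(r) + D_{j-1}$ via the Lipschitz constant $1/\sigma_k(\eps_k)$ of the truncation factor, and correctly observe that this multiplicative route degrades badly as $\eps \to 0$. You do not need it. Each application of \autoref{thm:one-marginal} produces $P^{(j)}$ with $\lVert P^{(j)} - P^{(j-1)} \rVert_{L^\infty(\mu)} < K_j\,\eps_j$, so by the crude triangle inequality
\[
\omega_{P^{(j)}}(r) \;\leq\; \omega_{P^{(j-1)}}(r) + 2K_j\,\eps_j
\;\leq\; \omega_P(r) + 2\sum_{k \leq j} K_k\,\eps_k ,
\]
a purely additive comparison whose error is $O(\eps)$ with constants depending only on $P$. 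Feeding $\omega_{P^{(j-1)}}(r) \leq \omega_P(r) + D\eps$ into \eqref{eq:sigma} and shifting $\theta$ to absorb the additive defect shows $\sigma_j^{P^{(j-1)}}(c'_j \eps) \geq \sigma_j^{P}(\eps)$ for $c'_j = 1 + D$; no divergence occurs. So the fix exists and is simpler than you anticipate, but the proposal as written leaves precisely its ``hard part'' open, and the multiplicative ansatz you start from would not close it.
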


\begin{proof}
	Let $\kappa = \min \gra{\kappa_1, \dotsc, \kappa_N}$, where $\kappa_j$ is given by \autoref{thm:one-marginal}. If $\eps < \kappa$ we have a coupling $P_1$ such that $\pi_1P_1 = \rho_1$, $\pi_jP_1 = \pi_jP$ for $j \geq 2$ and $\norm{P_1 - P}_{L^\infty(\mu)} < K_1\eps$. To keep in mind that $K_1$ depends only on $\pi_1P$, let us write it as $K_1 = K(\pi_1P)$.
	
	By applying \autoref{thm:one-marginal} to $P_1$, we can now find a coupling $P_2$ such that $\pi_jP_2 = \rho_j$ for $j = 1, 2$, $\pi_jP_2 = \pi_jP$ for $j \geq 3$ and
	\[ \norm{P_2 - P_1}_{L^\infty(\mu)} < K(\pi_2 P_1) \eps = K(\pi_2 P) \eps, \]
	where we exploited the fact that $\pi_2P_1 = \pi_2 P$.
	
	Therefore,
	\[ \norm{P_2 - P}_{L^\infty(\mu)} < (K(\pi_1P) + K(\pi_2P))\eps = (K_1 + K_2) \eps. \]
	
	By continuing in the same way we eventually get $P' \in C(X)$ such that $\pi_jP' = \rho_j$ for every $j = 1, \dotsc, N$ and
	\[ \norm{P' - P} < K\eps, \quad K \eqdef \sum_{j = 1}^N K_j. \qedhere \]
\end{proof}

This allows to approximate a continuous coupling in the uniform norm while matching a marginal contstraint, as follows.

\begin{corollary}
	Suppose that $\spt \mu$ is compact. Let $P$ be a coupling and $(\rho^n_j)_{n \in \N}$ for $j = 1, \dotsc, N$ sequences of marginals such that $\rho_j^n \to \pi_jP$ in $L^\infty(\mu_j)$ for every $j$. Then there exists a sequence $(P_n)_{n \in \N}$ of couplings such that $\pi_j P_n = \rho_n$ for every $j,n$ and $P_n \to P$ in $L^\infty(\mu)$.
\end{corollary}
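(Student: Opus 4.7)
The plan is to reduce the statement to a direct application of \autoref{thm:multi-marginal}, extracting from the uniform convergence of the marginals a sequence of thresholds $\eps_n \to 0$ that feed into that theorem. Let $\kappa, K > 0$ be the constants produced by \autoref{thm:multi-marginal} applied to the fixed coupling $P$. Pick a decreasing sequence $\eps_m \downarrow 0$ with $\eps_m < \kappa$ (for instance $\eps_m = \min(\kappa/2, 1/m)$). By \autoref{remark:sigma}, $\sigma_j(\eps_m) > 0$ for every $j$ and every $m$, so by the hypothesis $\rho_j^n \to \pi_j P$ in $L^\infty(\mu_j)$ there exists an increasing sequence of indices $n_m$ such that for every $n \geq n_m$ and every $j = 1, \dotsc, N$,
\[
	\norm{\rho_j^n - \pi_j P}_{L^\infty(\mu_j)} < \sigma_j(\eps_m).
\]

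Given any $n \geq n_1$, let $m(n) \eqdef \max\{m : n \geq n_m\}$; the inequality above at level $m(n)$ is exactly the hypothesis of \autoref{thm:multi-marginal} with threshold $\eps = \eps_{m(n)}$. Applying that theorem yields a coupling $P_n$ with $\pi_j P_n = \rho_j^n$ for every $j$ and
\[
	\norm{P - P_n}_{L^\infty(\mu)} < K \eps_{m(n)}.
\]
For the finitely many indices $n < n_1$, set $P_n \eqdef \rho_1^n \otimes \dotsb \otimes \rho_N^n$, which is a coupling in $\Pi(\rho_1^n, \dotsc, \rho_N^n)$. Since $n_m \uparrow \infty$ forces $m(n) \to \infty$, and hence $\eps_{m(n)} \to 0$, we conclude $P_n \to P$ in $L^\infty(\mu)$.

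There is no real obstacle beyond the bookkeeping: once \autoref{thm:multi-marginal} is in hand, the only thing to verify is that the hypothesis $\norm{\rho_j^n - \pi_j P}_{L^\infty(\mu_j)} < \sigma_j(\eps)$ can be satisfied along a sequence $\eps = \eps_{m(n)}$ with $\eps_{m(n)} \to 0$, which is guaranteed precisely by the strict positivity of $\sigma_j$ on $(0,+\infty)$ established in \autoref{remark:sigma} (this is where compactness of $\spt \mu$ is essential, through the radial lower bounds $f_k$).
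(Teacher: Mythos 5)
Your argument is correct and is essentially the paper's: both reduce to \autoref{thm:multi-marginal} via the positivity/monotonicity of $\sigma_j$ from \autoref{remark:sigma}. The only cosmetic difference is that you build the thresholds by a diagonal subsequence $(n_m)$, whereas the paper defines $\eps_n = \max_j \sigma_j^{-1}\bigl(\norm{\rho_j^n - \pi_j P}_{L^\infty(\mu_j)}\bigr)$ directly and uses $\sigma_j^{-1}(t) \to 0$ as $t \to 0^+$; your version has the small extra care of handling the finitely many initial indices explicitly, which the paper glosses over.
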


\begin{proof}
	Recall the invertibility of $\sigma_j$ for every $j$ (\autoref{remark:sigma}), and define
	\[ \eps_n = \max_{j = 1, \dotsc, N} \sigma_j^{-1}\pa{\norm{\rho_j^n - \pi_jP}_{L^\infty(\mu_j)}}. \]
	
	Since the $\sigma_j$'s are increasing, we have $\sigma_j(\eps_n) \geq \lVert \rho_j^n - \pi_jP \rVert_{L^\infty(\mu_j)}$, hence by \autoref{thm:multi-marginal} we can find $P_n$ such that $\norm{P_n - P}_{L^\infty(\mu)} \leq K\eps_n$. Finally,
	\[ \lim_{t \to 0^+} \sigma_j^{-1}(t) = 0 \implies \lim_{n \to \infty} \eps_n = 0, \]
	as wanted. 
\end{proof}

\section{Remarks on \autoref{thm:main}} \label{section:remarks}

As stated in the Introduction, we give in this section some remarks about the sharpness of our main result. For $b,h > 0$ we define the tent function $\Delta \colon \R \to \R$ of base $2b$ and height $h$ as
\begin{equation} \label{eq:tent-function}
\Delta(b,h; t) = \begin{cases} h\pa{1 + \frac{t}{b}} & t \in [-b, 0] \\ h\pa{1 - \frac{t}{b}} & t \in [0, b] \\
0 & \text{otherwise,} \end{cases}
\end{equation}

\subsection*{A. The compactness of $\spt \mu$ is necessary}

This example shows that the compactness of $\spt \mu$ is necessary. With $N = 2$, let $X_1 = \R$, $\mu_1 \in \Prob(X_1)$ not compactly supported and $X_2 = [-\frac{1}{2}, \frac{1}{2}]$ with the Lebesgue measure.

Fix a monotone sequence of points $(y_n)_{n \in \N}$ such that $y_n \in \spt \mu_1$ and $y_n \nearrow +\infty$ or $y_n \searrow -\infty$, with $\abs{y_{n+1} - y_n} \geq 1$. Let $\Delta(b, h; x)$ be the tent function defined in \eqref{eq:tent-function}, and $P \in C(\R \times [-1,1])$ the coupling
\[ P(x_1, x_2) = c \sum_{n = 2}^\infty \Delta\pa{\frac{1}{n}, 1; x_1 - y_n} \Delta\pa{\frac{1}{n}, 1; x_2}, \]
where $c > 0$ is such that $\int_X P d\mu = 1$. Observe that
\[ \pi_1 P = c \sum_{n = 2}^\infty \Delta\pa{\frac{1}{n}, \frac{1}{n}; x_1 - y_n}. \]

Given $n \in \N$, it is possible to take a marginal $\rho_n$ such that $\norm{\rho - \pi_1P}_\infty \leq \frac{c}{n}$ by letting $\rho_n \equiv 0$ on the interval $\left[y_n - \frac{1}{n}, y_n + \frac{1}{n} \right]$ (and adjusting it outside so that $\int \rho_n = 1$). In particular, if $P'$ is any coupling with $\pi_1P' = \rho_n$, we have $P' \equiv 0$ on the strip $\left[y_n - \frac{1}{n}, y_n + \frac{1}{n} \right] \times \left[-\frac{1}{2}, \frac{1}{2}\right]$. Using the fact that $y_n \in \spt \mu_1$, this in turn implies
\[ \norm{P - P'}_{L^\infty(\mu)} \geq \norm{P \cdot \One_{[y_n - \frac{1}{n}, y_n + \frac{1}{n}] \times [-\frac{1}{2}, \frac{1}{2}]}}_{L^\infty(\mu)} = c, \]
which shows that the coupling $P$ cannot be approximated as in \autoref{thm:main}.

The same example can be extended to every $N$ just by considering $N$-variate products of tent functions on the space $\R \times [-\frac{1}{2}, \frac{1}{2}]^{N-1}$

\subsection*{B. The continuity of $P$ is necessary}

A very similar idea as in the previous example can be exploited to show that the continuity assumption on $P$ is also necessary. On $[0,1] \times [0,1]$, equipped with the Lebesgue measure, one can consider the symmetric coupling
\[ P(x_1, x_2) = \sum_{n = 2}^\infty \Delta\pa{\frac{1}{2^n}, 1; x_1 - \frac{3}{2^n}} \Delta\pa{\frac{1}{2^n}, 1; x_2 - \frac{3}{2^n}}, \]
which is discontinuous in $(0,0)$, but has continuous marginals
\[ \pi_1 P(x_1) = \sum_{n = 2}^\infty \frac{1}{2^n} \Delta\pa{\frac{1}{2^n}, 1; x_1 - \frac{3}{2^n}}. \]

Again, given $n$, it is possible to take a continuous marginal $\rho_n$ such that $\norm{\rho_n - \pi_1 P}_\infty \leq \frac{1}{2^n}$ by letting $\rho_n \equiv 0$ on the interval $\left[ \frac{1}{2^{n-1}}, \frac{1}{2^n} \right]$ (and adjusting it outside so that $\int \rho_n = 1$). Every coupling $P'$ with $\pi_1 P' = \rho_n$ must be equal to zero in the strip $\left[ \frac{1}{2^{n-1}}, \frac{1}{2^n} \right] \times [0,1]$, therefore
\[ \norm{P - P'}_\infty \geq \sup \gra{P(x) \st x \in \left[ \frac{1}{2^{n-1}}, \frac{1}{2^n} \right] \times [0,1]} = 1, \]
which shows that the coupling $P$ cannot be approximated as in \autoref{thm:main}.

As before, the same example extends to every $N$ by considering $N$-variate products of tent functions on $[0,1]^N$.

\subsection*{C. Sharpness of the threshold $\sigma(\eps)$ in \autoref{thm:one-marginal}}

For Lipschitz couplings on an interval with the Lebesgue measure, \autoref{thm:one-marginal} is sharp in the following sense: there exists a constant $C(N)$ such that, for every $0 < \eps < \frac{1}{4}L$, there exist $P \in C([0,1]^N)$, $\rho_1 \in C([0,1])$ such that
\begin{enumerate}[(i)]
	\item $P$ is $L$-Lipschitz;
	\item $\sigma_1(\eps) \leq \norm{\rho_1 - \pi_1P} \leq C(N) \sigma_1(\eps)$;
	\item for every $P'$ with $\pi_1 P' = \rho_1$, $\norm{P - P'}_\infty \geq  \eps$.
\end{enumerate}

First of all observe that in this case, exploiting $\omega(r) = Lr$, we have
\[ \sigma(\eps) = \sup_{\theta \in [0,1]} (1-\theta)\eps\pa{\frac{\theta\eps}{L(N-1)}}^{N-1} = c(N) \frac{\eps^N}{L^{N-1}}, \]
where explicitly
\[ c(N) = \frac{1}{(N-1)^{N-1}} \sup_{\theta \in (0,1)} (1-\theta)\theta^{N-1} = \frac{1}{N^N}. \]

Given $0 < \eps < \frac{1}{4}L$, we let $h \eqdef \eps^{1/N}$ and $b \eqdef \frac{\eps}{L}$, and we consider $P$ defined as
\[ P(x) = \prod_{j = 1}^N \Delta\pa{b, h; x_j - \frac{1}{4}} + Q(x), \]
where $Q$ is any $L$-Lipschitz function with $\spt Q \subseteq [\frac{1}{2}, 1]^N$, $\int Q = 1 - \pa{bh}^N$, and $\Delta$ is the usual tent function introduced in \eqref{eq:tent-function}. Notice that the product of tent functions is $L$-Lipschitz, because in a hypercube of size $\frac{\eps}{L}$ it has a peak of height $\eps$. The first marginal of this coupling is
\begin{ieee*}{rCl}
	\pi_1 P(x_1) &=& (bh)^{N-1} \Delta\pa{b, h; x_1 - \frac{1}{4}} + \pi_1Q(x_1) \\
	&=& \Delta\pa{b, b^{N-1}h^N; x_1 - \frac{1}{4}} + \pi_1Q(x_1).
\end{ieee*}
	
We now take any $\rho_1$ such that $\spt \rho_1 \subseteq [\frac{1}{2}, 1]$ with $\norm{\rho_1 - \pi_1P}_\infty = b^{N-1}h^N$. Crucially,
\[ b^{N-1}h^N = \frac{\eps^N}{L^{N-1}} = N^N \sigma(\eps), \]
hence $\rho_1$ satisfies the constraint (ii). However, for every $P'$ such that $\pi_1P' = \rho_1$, necessarily $\spt P' \subseteq [\frac{1}{2}, 1] \times [0,1]^{N-1}$. Hence
\[ \norm{P - P'}_\infty \geq \sup_{x \in [0, \frac{1}{2}] \times [0,1]^{N-1}} P(x) = P\pa{\frac{1}{4}, \dotsc, \frac{1}{4}} = h^N = \eps. \]

\nocite{ambrosio2013user}

\bibliographystyle{plain}
\bibliography{biblio.bib}

\begin{thebibliography}{10}

\bibitem{ambrosio2013user}
Luigi Ambrosio and Nicola Gigli.
\newblock A user’s guide to optimal transport.
\newblock In {\em Modelling and optimisation of flows on networks}, pages
  1--155. Springer, 2013.

\bibitem{bindini2020smoothing}
Ugo Bindini.
\newblock Smoothing operators in multi-marginal optimal transport.
\newblock {\em Mathematical Physics, Analysis and Geometry}, 23, 2020.

\bibitem{bindini2017optimal}
Ugo Bindini and Luigi De~Pascale.
\newblock Optimal transport with coulomb cost and the semiclassical limit of
  density functional theory.
\newblock {\em Journal de l'École polytechnique}, 4:909--934, 2017.

\bibitem{bindini2019from}
Ugo Bindini and Luigi De~Pascale.
\newblock From wave-functions to single electron densities.
\newblock {\em arXiv preprint arXiv:1907.02024}, 2019.

\bibitem{buttazzo2012optimal}
Giuseppe Buttazzo, Luigi De~Pascale, and Paola Gori-Giorgi.
\newblock Optimal-transport formulation of electronic density-functional
  theory.
\newblock {\em Physical Review A}, 85(6):062502, 2012.

\bibitem{carlier2017convergence}
Guillaume Carlier, Vincent Duval, Gabriel Peyr{\'e}, and Bernhard Schmitzer.
\newblock Convergence of entropic schemes for optimal transport and gradient
  flows.
\newblock {\em SIAM Journal on Mathematical Analysis}, 49(2):1385--1418, 2017.

\bibitem{clason2021entropic}
Christian Clason, Dirk~A Lorenz, Hinrich Mahler, and Benedikt Wirth.
\newblock Entropic regularization of continuous optimal transport problems.
\newblock {\em Journal of Mathematical Analysis and Applications},
  494(1):124432, 2021.

\bibitem{cotar2013density}
Codina Cotar, Gero Friesecke, and Claudia Kl{\"u}ppelberg.
\newblock Density functional theory and optimal transportation with coulomb
  cost.
\newblock {\em Communications on Pure and Applied Mathematics}, 66(4):548--599,
  2013.

\bibitem{cotar2018smoothing}
Codina Cotar, Gero Friesecke, and Claudia Kl{\"u}ppelberg.
\newblock Smoothing of transport plans with fixed marginals and rigorous
  semiclassical limit of the hohenberg--kohn functional.
\newblock {\em Archive for Rational Mechanics and Analysis}, 228(3):891--922,
  2018.

\bibitem{cuturi2013sinkhorn}
Marco Cuturi.
\newblock Sinkhorn distances: Lightspeed computation of optimal transport.
\newblock {\em Advances in neural information processing systems},
  26:2292--2300, 2013.

\bibitem{essid2018quadratically}
Montacer Essid and Justin Solomon.
\newblock Quadratically regularized optimal transport on graphs.
\newblock {\em SIAM J. Sci. Comput.}, 40(4):A1961--A1986, 2018.

\bibitem{flamary2014optimal}
R{\'e}mi Flamary, Nicolas Courty, Alain Rakotomamonjy, and Devis Tuia.
\newblock Optimal transport with laplacian regularization.
\newblock In {\em NIPS 2014, Workshop on Optimal Transport and Machine
  Learning}, 2014.

\bibitem{gerolin2020multi}
Augusto Gerolin, Anna Kausamo, and Tapio Rajala.
\newblock Multi-marginal entropy-transport with repulsive cost.
\newblock {\em Calc. Var. Partial Differential Equations}, 59(3):Paper No. 90,
  20, 2020.

\bibitem{korman2015optimal}
Jonathan Korman and Robert McCann.
\newblock Optimal transportation with capacity constraints.
\newblock {\em Transactions of the American Mathematical Society},
  367(3):1501--1521, 2015.

\bibitem{lewin2018semi}
Mathieu Lewin.
\newblock Semi-classical limit of the levy--lieb functional in density
  functional theory.
\newblock {\em Comptes Rendus Mathematique}, 356(4):449--455, 2018.

\bibitem{lewin2019optimal}
Mathieu Lewin, Paola Gori-Giorgi, and Brendan Pass.
\newblock Optimal transport methods in density functional theory.
\newblock http://www.birs.ca/workshops/2019/19w5035/report19w5035.pdf, 2019.

\bibitem{lorenz2021quadratically}
Dirk~A. Lorenz, Paul Manns, and Christian Meyer.
\newblock Quadratically regularized optimal transport.
\newblock {\em Appl. Math. Optim.}, 83(3):1919--1949, 2021.

\bibitem{santambrogio2015optimal}
Filippo Santambrogio.
\newblock Optimal transport for applied mathematicians.
\newblock {\em Birk{\"a}user, NY}, 55(58-63):94, 2015.

\bibitem{villani2008optimal}
C{\'e}dric Villani.
\newblock {\em Optimal transport: old and new}, volume 338.
\newblock Springer Science \& Business Media, 2008.

\end{thebibliography}

\end{document}